\numberwithin{equation}{section}
\theoremstyle{plain}
\newtheorem{thm}{Theorem}[section]
\newtheorem{lem}[thm]{Lemma}
\newtheorem{prop}[thm]{Proposition}
\theoremstyle{definition}
\newtheorem{defn}[thm]{Definition}
\title{Inertia of Retracts in Demushkin groups}
\author{Henrique Souza}
\date{November 4, 2021}
\begin{document}

\begin{abstract} Exploring inequalities regarding the rank and relation gradients of pro-$p$ modules and building upon recent results of Y. Antolín, A. Jaikin-Zapiran and M. Shusterman, we prove that every retract of a Demushkin group is inert in the sense of the Dicks-Ventura Inertia Conjecture.
\end{abstract}

\maketitle

\section{Introduction}

In 1996, W. Dicks and E. Ventura introduced the concept of inertia for subgroups of the abstract free groups of finite rank $F_n$: a subgroup $H$ of $F_n$ is inert if $\operatorname{rk}(H\cap K) \leq \operatorname{rk}(K)$ for every subgroup $K$ of $F_n$ (\cite{dicksventura}). In that same monograph, they proved the subgroup of fixed points of any family of injective endomorphisms of $F_n$ is inert, and conjectured whether this holds for the subgroup of fixed points of any family of endomorphisms. It can be shown (\cite[Thm. 1]{turner}, \cite[Conj. 8.1]{venturasurvey}) that such subgroups are inert in some retract of $F_n$, and thus the now called Dicks-Ventura Inertia Conjecture is equivalent to retracts of $F_n$ being inert. This conjecture and it's version for surface groups were proven in \cite[Cor. 1.5]{antolin_zapirain_20} by Y. Antolín and A. Jaikin-Zapirain.

This paper concerns the problem of inertia for the retracts of Demushkin groups, defined in Section~\ref{sec-preliminaries}. For a general pro-$p$ groups, we define inertia and retracts as follows:

The rank of a finitely generated pro-$p$ group $G$ is the cardinality of a minimal set of topological generators of $G$, and this cardinal number is denoted by $d(G)$. A closed subgroup $H$ of a pro-$p$ group $G$ is inert if $d(H\cap K) \leq d(K)$ for every closed subgroup $K$ of $G$. Since this inequality holds for all subgroups of $K$ if $K$ is not finitely generated, it suffices to check inertia for the finitely generated such $K$.

We say that a closed subgroup $H$ of a pro-$p$ group $G$ is a retract of $G$ if there exists a homomorpshim $\tau\colon G \to H$ that extends the identity map on $H$. Equivalently, $H$ is a retract of $G$ if there exists a closed normal subgroup $N$ of $G$ such that $G \simeq N \rtimes H$. If $H$ is a retract of $G$, then $H$ is also a retract of every closed subgroup $K$ of $G$ containing $H$, in which case we have $d(H) \leq d(K)$. Moreover, equality is achieved if and only if $H = K$, since $H/\Phi(H)$ is a direct factor of $K/\Phi(K)$.

In the free pro-$p$ case, the inertia of retracts can be proven by more elementary means. If $H$ is a retract of a finitely generated free pro-$p$ group $F$, then $H$ is a free factor of $F$ (\cite[Lem. 3.1]{lubotzkyCombinatorialGroupTheory1982}). If $K$ is any finitely generated subgroup of $F$, then $H\cap K$ is a free factor of $K$ by the pro-$p$ analogue of Kurosh's subgroup theorem  (\cite[Thm. 4.4]{herfortSubgroupsFreePropproducts1987}) and therefore $d(H\cap K) \leq d(K)$.

Our main result can then be stated as:

\begin{thm}\label{thm_retracts_are_inert} Every retract of a Demushkin group is inert.
\end{thm}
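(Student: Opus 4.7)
The plan is to exploit the dichotomy satisfied by closed subgroups of a Demushkin group $G$: every finite-index closed subgroup is itself Demushkin, while every closed subgroup of infinite index is free pro-$p$, since cohomological dimension drops on infinite-index subgroups of a Poincaré duality group of dimension $2$. Given a retract $H$ of $G$ with $G \simeq N \rtimes H$, the case $[G:H]<\infty$ is immediate: $N$ is then a finite normal $p$-subgroup of the torsion-free group $G$, hence trivial, forcing $H = G$ and making the statement vacuous. So I may assume $H$ has infinite index in $G$, whence $H$ and every intersection $H \cap K$ are free pro-$p$.

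Fix a finitely generated closed subgroup $K$ of $G$; the aim is $d(H \cap K) \leq d(K)$. The strategy is to pass to the Iwasawa algebra $\mathbb{F}_p[[G]]$ and convert both sides into module-theoretic invariants. A Demushkin group of rank $n$ admits a minimal free resolution
\[
0 \to \mathbb{F}_p[[G]] \to \mathbb{F}_p[[G]]^{n} \to \mathbb{F}_p[[G]] \to \mathbb{F}_p \to 0
\]
whose relation module is cyclic. Restricting this resolution along $K \hookrightarrow G$ and further along $H \cap K \hookrightarrow K$ should let one read off $d(K)$ and $d(H\cap K)$ from the rank and relation gradients of the pro-$p$ modules introduced in the paper, in the spirit of the treatment of free and surface groups by Antolín, Jaikin-Zapirain and Shusterman.

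The retract hypothesis enters via the structural fact that $\mathbb{F}_p[[G]]$ is a free $\mathbb{F}_p[[H]]$-module: the decomposition $G \simeq N \rtimes H$ yields a topological isomorphism of right $\mathbb{F}_p[[H]]$-modules $\mathbb{F}_p[[G]] \cong \mathbb{F}_p[[N]] \mathbin{\hat{\otimes}}_{\mathbb{F}_p} \mathbb{F}_p[[H]]$. This freeness is what is needed to compare ranks of $\mathbb{F}_p[[G]]$-modules with their restrictions to $\mathbb{F}_p[[H]]$, and dually to interact with the gradient inequalities. One error term arising from the single Demushkin relation will appear in the comparison, but it is controlled by $\dim H^2(G,\mathbb{F}_p)=1$ and disappears after restriction to $H\cap K$, since the latter has cohomological dimension at most $1$.

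The main obstacle I expect lies in setting up the transfer principle so that ``$H$ is a retract of $G$'' propagates to a usable statement about the pair $(K, H\cap K)$. Unlike the free or surface setting, $K$ may itself be Demushkin when $[G:K]<\infty$, so the Demushkin relation on $K$ must be tracked carefully through all restrictions in order not to degrade the final rank inequality. Executing this module-theoretic bookkeeping, and ultimately invoking the rank/relation gradient inequalities alluded to in the abstract, will be the technical heart of the proof.
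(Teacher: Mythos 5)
There is a genuine gap: your proposal sets up correct preliminaries (the finite-index/infinite-index dichotomy, the fact that a proper retract of an infinite Demushkin group has infinite index and is free pro-$p$, the shape of the minimal free resolution) but then defers the entire substance of the argument. Phrases like ``should let one read off $d(K)$ and $d(H\cap K)$'' and ``will be the technical heart of the proof'' mark exactly the point where the actual work begins, and no mechanism is given for deriving the inequality $d(H\cap K)\leq d(K)$. The paper's route is to introduce the augmentation-type module $I_{G/H}=\ker\bigl([\![\mathbb{F}_p(G/H)]\!]\to\mathbb{F}_p\bigr)$, to show that a retract satisfies $\beta_1^G(I_{G/H})=0$ (this is where the retract hypothesis genuinely enters, via injectivity of $H/\Phi(H)\to G/\Phi(G)$ and Proposition~4.5 of Antol\'in--Jaikin-Zapirain), and then to prove the hard transfer statement that $\beta_1^G(I_{G/H})=0$ forces $\beta_1^K(I_{K/(H\cap K)})=0$ for every finitely generated closed $K$; the rank inequality then falls out of the relation-gradient formula for $[\![\mathbb{F}_p(G/H)]\!]$. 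That transfer step requires a double-coset decomposition of $[\![\mathbb{F}_p(G/H)]\!]$ over $K$, Howson's property for Demushkin groups, and a submultiplicativity estimate $\beta_1^G([\![\mathbb{F}_p(K\backslash G)]\!]\hat{\otimes}_{\mathbb{F}_p}I_{G/H})\leq\beta_1^G([\![\mathbb{F}_p(K\backslash G)]\!])\,\beta_1^G(I_{G/H})$ carried out with approximating submodules as in Jaikin-Zapirain--Shusterman; none of this is visible in your sketch.

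A second, more pointed problem: you locate the use of the retract hypothesis in the claim that $[\![\mathbb{F}_p G]\!]$ is a free $[\![\mathbb{F}_p H]\!]$-module. That freeness holds for \emph{every} closed subgroup $H$ of \emph{every} pro-$p$ group $G$, with no semidirect product decomposition needed, so it cannot be the lever that distinguishes retracts from arbitrary subgroups (for which the inertia conclusion is false in general). The retract structure must instead be converted into the vanishing of a relation gradient (or some equivalent homological independence statement), and that conversion is precisely the missing idea. Minor additional quibble: your dismissal of the finite-index case via ``$N$ is a finite normal subgroup of the torsion-free group $G$'' fails for the one finite Demushkin group $\mathbb{Z}/2\mathbb{Z}$, though that case is of course trivial for other reasons.
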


The paper is organized as follows: in Section~\ref{sec-preliminaries} we define and state some properties of Demushkin groups and it's homological gradients, and the proof of Theorem~\ref{thm_retracts_are_inert} is given in Section~\ref{sec_inertia}. We also fix some notations: for any pro-$p$ group $G$, we denote by $[\![\mathbb{F}_p G]\!]$ the complete group algebra of $G$ over the field $\mathbb{F}_p$ of $p$ elements. In general, if $X$ is a profinite space, $[\![\mathbb{F}_p X]\!]$ denotes the free pro-$p$ $\mathbb{F}_p$-module (vector space) over $X$. The complete tensor product of two pro-$p$ $[\![\mathbb{F}_p G]\!]$-modules $A$ and $B$ is denoted by $A \hat{\otimes}_{[\![\mathbb{F}_p G]\!]} B$. For any closed subgroup $H$ of a pro-$p$ group $G$ and any $[\![\mathbb{F}_p H]\!]$-module $M$, $\operatorname{Ind}^G_H M$ denotes the induced $[\![\mathbb{F}_p G]\!]$-module $[\![\mathbb{F}_p G]\!] \hat{\otimes}_{[\![\mathbb{F}_p H]\!]} M$.

\subsection*{Acknowledgements} The author would like to thank Andrei Jaikin-Zapirain, Pavel Zalesskii and Theo Zapata for helpful comments. He also thanks Andrei Jaikin-Zapirain for presenting him to the problem of inertia of retracts. The contents of this paper form part of the author's M.Sc. dissertation at the University of Brasilia, presented under the advice of Theo Zapata.

\section{Preliminaries}\label{sec-preliminaries}

We say that a pro-$p$ group $G$ is a Demushkin group if $G$ is a pro-$p$ Poincaré duality group in dimension $2$, that is, if $G$ satifies the three conditions below:
\begin{enumerate}
	\item $H^i(G,\mathbb{F}_p)$ is finite for each $i \geq 0$;
	\item $\dim_{\mathbb{F}_p} H^2(G, \mathbb{F}_p) = 1$;
	\item The cup product $$\cup\colon H^i(G,\mathbb{F}_p) \times H^{2-i}(G,\mathbb{F}_p) \to H^2(G,\mathbb{F}_p) $$ is a non-degerate bilinear form for every $i \geq 0$.
\end{enumerate}
Some examples of Demushkin groups are the cyclic group of order two $\mathbb{Z}/2\mathbb{Z}$, semidirect products of the form $\mathbb{Z}_p \rtimes \mathbb{Z}_p$ and the pro-$p$ completions of orientable surface groups. The first one is the only finite Demushkin group and the only one to have infinite cohomological dimension, and the first two comprise the class of all solvable Demushkin groups.

We recall the rank formula for open subgroups of Demushkin groups: if $U$ is an open subgroup of a Demushkin group $G$, then $U$ is also a Demushkin group and its rank satisfies
\begin{equation}\label{eq_demushkin_rank_formula}
    d(U) = (G\colon U)(d(G) - 2) + 2\,.
\end{equation}
Moreover, if $H$ is a closed subgroup of $G$ with infinite index, then $H$ is a free pro-$p$ group (\cite[Section~4.5]{serre_97}).

We now define some homological gradients of Demushkin groups that will be used on Section~\ref{sec_inertia}.

For a pro-$p$ group $G$ and a profinite $[\![\mathbb{F}_p G]\!]$-module $M$, we say that $M$ is finitely generated if there exists a finite collection of elements $m_1,\ldots,m_k \in M$ such that every element of $M$ can be written as a finite linear combination of the $m_i$ with coefficients in $[\![\mathbb{F}_p G]$. We say that $M$ is finitely related as an $[\![\mathbb{F}_p G]\!]$-module if $H_1(G, M)$ is finite. If $M$ is both finitely generated and finitely related, we say that $M$ is a finitely presented $[\![\mathbb{F}_p G]$-module.

By Nakayama's lemma, we know that $M$ is a finitely generated $[\![\mathbb{F}_p G]\!]$-module if and only if $$M/([\![\mathbb{F}_p G]\!] - 1)M \simeq M_G \simeq H_0(G,M)$$ is finite. If $M$ is finitely generated and $$0 \to N \to F \to M \to 0$$ is a presentation of $M$ with $F$ a free $[\![\mathbb{F}_p G]\!]$-module of minimal rank, the long exact sequence induced in homology gives us an isomorphism $$H_1(G,M) \simeq H_0(G,N)\,,$$ from which we deduce that $M$ is finitely related if and only if $N$ is a finitely generated $[\![\mathbb{F}_p G]\!]$-module.

\begin{defn} For a pro-$p$ group $G$ and a profinite $[\![\mathbb{F}_p G]\!]$-module $M$, the rank gradient $\beta_0^G(M)$ and the relation gradient $\beta_1^G(M)$ are defined as
the infimum $$\beta_i^G(M) = \inf_{U \unlhd_o G} \frac{\dim_{\mathbb{F}_p} H_i(U,M)}{(G\colon U)}\,.$$
\end{defn}

The gradients are always non-negative (and possibly infinite) real numbers. Since we have $$\frac{\dim_{\mathbb{F}_p} H_i(U,M)}{(G\colon U)} \leq \frac{\dim_{\mathbb{F}_p} H_i(V,M)}{(G\colon V)}$$ whenever $U \leq V$ by \cite[Lem. 4.2]{jaikin_shusterman_19}, we see that if $M$ is finitely generated (resp. presented), then $\beta_0^G(M)$ (resp. $\beta_1^G(M)$) is finite. We recall some properties of the rank and relation gradients:

\begin{prop}[{\cite[Section~4]{jaikin_shusterman_19}}]\label{prop_properties_of_gradients} Let $G$ be a finitely presented pro-$p$ group. Then, for any $i \in \{0,1\}$ and any $[\![\mathbb{F}_p G]\!]$-module $M$ such that $\beta_i^G(M)$ is finite, the following statements hold:
\begin{enumerate}[(i)]
    \item For any open subgroup $U$ of $G$, we have $\beta_i^U(M) = (G\colon U)\beta_i^G(M)$;
    \item For any closed subgroup $H$ of $G$ and any $[\![\mathbb{F}_p H]\!]$-module $N$ such that $\beta_i^H(N)$ is finite, we have $\beta_i^G(\operatorname{Ind}^G_H N) = \beta_i^H(N)$;
    \item If $M'$ is a submodule of $M$ and $M'' = M/M'$, then $$\beta_i^G(M) \leq \beta_i^G(M') + \beta_i^G(M'')\,;$$
\end{enumerate}
\end{prop}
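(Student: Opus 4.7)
The plan is to handle the three parts separately, with the common underlying technique being the monotonicity estimate from \cite[Lem.~4.2]{jaikin_shusterman_19}: since the ratios $\dim_{\mathbb{F}_p} H_i(V, M)/(G : V)$ only decrease as $V$ shrinks, the infimum defining $\beta_i^G(M)$ is realised as a limit along any cofinal descending system of open normal subgroups. This will matter crucially whenever we need limits to commute with sums.

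For (i), I would observe that the open normal subgroups $V$ of $G$ contained in $U$ are cofinal among all open normal subgroups of $U$: given $V \unlhd_o U$, the intersection of its (finitely many) $G$-conjugates is open normal in $G$ and contained in $V$. By monotonicity, restricting the infimum in the definition of $\beta_i^U(M)$ to such $V$ does not change its value, and the identity $(U:V) = (G:V)/(G:U)$ then yields $\beta_i^U(M) = (G:U)\,\beta_i^G(M)$ directly.

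For (ii), the key tool is the pro-$p$ Mackey decomposition for an open normal $U \unlhd_o G$: the double coset space $U\backslash G/H$ is finite and
$$\operatorname{Res}^G_U \operatorname{Ind}^G_H N \;\simeq\; \bigoplus_{g \in U\backslash G/H} \operatorname{Ind}^U_{U \cap gHg^{-1}} N^g,$$
so Shapiro's lemma yields $H_i(U, \operatorname{Ind}^G_H N) \simeq \bigoplus_g H_i(U \cap gHg^{-1}, N^g)$. Combined with the orbit-stabilizer identity $(G:U) = \sum_g (gHg^{-1} : gHg^{-1} \cap U)$, this writes the ratio defining $\beta_i^G(\operatorname{Ind}^G_H N)$ as a convex combination of terms $\dim H_i(U \cap gHg^{-1}, N^g)/(gHg^{-1} : gHg^{-1} \cap U)$, each at least $\beta_i^{gHg^{-1}}(N^g) = \beta_i^H(N)$ by conjugation invariance. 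For the reverse inequality, given $\epsilon > 0$ I would pick $W \unlhd_o H$ with $\dim H_i(W,N)/(H:W) < \beta_i^H(N) + \epsilon$ and $U \unlhd_o G$ small enough that $U \cap H \leq W$; normality of $U$ in $G$ then forces $U \cap gHg^{-1} \leq gWg^{-1}$ for every $g$, and monotonicity bounds every term by $\beta_i^H(N) + \epsilon$, whence the full convex combination is.

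For (iii), the long exact homology sequence of $0 \to M' \to M \to M'' \to 0$ gives, for any open $U$ and $i \in \{0,1\}$, the inequality $\dim H_i(U,M) \leq \dim H_i(U,M') + \dim H_i(U,M'')$ (from the three-term tail when $i=0$, and from the segment $H_1(U,M') \to H_1(U,M) \to H_1(U,M'')$ when $i=1$). Dividing by $(G:U)$ and using that, by monotonicity, all three normalised dimensions are actually limits along the cofinal descending net of open normal subgroups, the subadditivity follows from the fact that the limit of a sum of two convergent nets of non-negative reals is the sum of the limits. The main obstacle lies in (ii) when $H$ has infinite index in $G$: the number of double cosets grows unboundedly as $U$ shrinks, and one must choose $U$ uniformly (via $U \cap H \leq W$) so that all per-coset ratios are simultaneously controlled, rather than controlling them coset by coset.
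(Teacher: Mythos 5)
Your proof is correct, and it takes essentially the same route as the source: the paper gives no proof of this proposition but imports it verbatim from \cite[Section~4]{jaikin_shusterman_19}, and your three arguments (cofinality of normal cores plus monotonicity for (i), the Mackey double-coset decomposition with the weighted-average estimate and the uniform choice $U \cap H \leq W$ for (ii), and the long exact homology sequence together with convergence of the monotone nets for (iii)) are precisely the standard ones used there.
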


For a closed and finitely generated subgroup $H$ of an infinite Demushkin group $G$, the relation gradient of the induced module $\operatorname{Ind}^G_H \mathbb{F}_p \simeq [\![\mathbb{F}_p (G/H)]\!]$ satisfies
\begin{equation}\label{eq_relation_gradient_demushkin}
    \beta_1^G([\![\mathbb{F}_p(G/H)]\!]) =
    \begin{cases}
        d(H) - 2\,,\ \text{ if }(G\colon H) < \infty\,,\\
        d(H) - 1\,,\ \text{ otherwise.}
    \end{cases}
\end{equation}
This is a consequence of the rank formula~(\ref{eq_demushkin_rank_formula}) when $(G\colon H)$ is finite and of Schreier's formula (\cite[Thm.~3.6.2]{ribes_zalesskii_10}) when $(G\colon H)$ is infinite. We also need the following inequality for the relation gradients of $[\![\mathbb{F}_pG]\!]$-modules when $G$ is an infinite Demushkin group:

\begin{lem}[{\cite[Prop. 4.6]{jaikin_shusterman_19}}]\label{lem_criteria_inequality_relation_gradient} Let $G$ be a non-solvable Demushkin group and $M$ be a finitely related $[\![\mathbb{F}_pG]\!]$-module. If $N$ is a submodule of $M$ such that $M/N$ is either finite or satisfies $H_2(G,M/N) = 0$, then $\beta_1^G(N) \leq \beta_1^G(M)$.
\end{lem}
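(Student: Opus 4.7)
The plan is to derive the inequality from the long exact sequence in homology associated to $0 \to N \to M \to M/N \to 0$, restricted to each open normal subgroup $U \unlhd_o G$, and then argue that the error term $H_2(U,M/N)$ is negligible on the relation-gradient scale. The long exact sequence gives $H_2(U,M/N)\to H_1(U,N)\to H_1(U,M)$, hence
\[
\dim_{\mathbb{F}_p}H_1(U,N)\le \dim_{\mathbb{F}_p}H_1(U,M)+\dim_{\mathbb{F}_p}H_2(U,M/N).
\]
Dividing by $(G\colon U)$ and, for any $\varepsilon>0$, choosing a common $U$ that approximates both relevant infima to within $\varepsilon$ (using the monotonicity of $\dim H_i(U,\cdot)/(G\colon U)$ in $U$ recalled just before Proposition~\ref{prop_properties_of_gradients}), I obtain $\beta_1^G(N)\le \beta_1^G(M)+\beta_2^G(M/N)$, where $\beta_2^G(A):=\inf_{U\unlhd_o G}\dim_{\mathbb{F}_p}H_2(U,A)/(G\colon U)$. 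The lemma then reduces to showing $\beta_2^G(M/N)=0$ under either hypothesis.

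Case of finite $M/N$: fix a composition series $0=L_0\subset L_1\subset\cdots\subset L_k=M/N$ of $[\![\mathbb{F}_p G]\!]$-submodules of codimension one; because $G$ is pro-$p$, every simple $[\![\mathbb{F}_p G]\!]$-module is trivial, so each $L_i/L_{i-1}\cong\mathbb{F}_p$. A straightforward induction on $i$ using the long exact sequence bounds $\dim H_2(U,M/N)\le k\dim H_2(U,\mathbb{F}_p)$. Every open subgroup $U$ of $G$ is itself a Demushkin group, so $\dim H^2(U,\mathbb{F}_p)=1$; by Pontryagin duality $\dim H_2(U,\mathbb{F}_p)=1$ as well, giving $\dim H_2(U,M/N)\le k$ uniformly in $U$. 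Taking $U$ along the Frattini chain so that $(G\colon U)\to\infty$ yields $\beta_2^G(M/N)=0$.

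Case $H_2(G,M/N)=0$ (the main obstacle): the idea is to exploit that $G$ has cohomological dimension $2$ to take a projective resolution $0\to P_2\to P_1\to P_0\to M/N\to 0$ over $[\![\mathbb{F}_p G]\!]$, which, since $[\![\mathbb{F}_p G]\!]$ is a complete local ring with residue field $\mathbb{F}_p$, can be chosen minimal so that $H_i(G,M/N)\cong (P_i)_G$. Then $H_2(G,M/N)=0$ combined with Nakayama forces $P_2=0$, meaning $M/N$ has projective dimension $\le 1$; restricting a length-$1$ projective resolution to any open $U\le G$ stays projective (since $[\![\mathbb{F}_p G]\!]$ is a free $[\![\mathbb{F}_p U]\!]$-module), so $H_2(U,M/N)=0$ for every open $U$ and thus $\beta_2^G(M/N)=0$. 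The main difficulty is the minimality/Nakayama step, which is smooth when $M/N$ is finitely generated but needs justification in general; a fallback is to use Poincaré duality for the Demushkin group $G$, giving $H_2(G,A)\cong (A\otimes \chi)^G$ for a dualizing character $\chi$ and, via Shapiro plus Frobenius reciprocity, $H_2(U,A)\cong (A\otimes\chi|_U)^U$, reducing the task to bounding the twisted $U$-invariants of $M/N$ sublinearly in $(G\colon U)$ under the assumption that the twisted $G$-invariants vanish.
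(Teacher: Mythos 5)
First, note that the paper does not prove this lemma at all: it is imported verbatim from Jaikin-Zapirain and Shusterman (their Prop.~4.6), so there is no in-paper argument to compare yours against. Judged on its own, your reduction is the right one --- the long exact sequence gives $\dim H_1(U,N)\le \dim H_1(U,M)+\dim H_2(U,M/N)$, and everything hinges on showing that the $H_2$ term is $o((G\colon U))$ --- and your treatment of the finite case (composition series with trivial simple quotients, $\dim_{\mathbb{F}_p} H_2(U,\mathbb{F}_p)=1$ for the Demushkin group $U$, then let $(G\colon U)\to\infty$) is complete and correct.

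The gap is in the case $H_2(G,M/N)=0$, and you flagged it yourself without closing it. Your minimal-resolution argument needs $M/N$ (hence the terms of its resolution) to be finitely generated for Nakayama to apply, and nothing in the hypotheses gives you that: $M$ is only assumed finitely \emph{related}, and $N$ is an arbitrary submodule. Your fallback via Poincar\'e duality ends with ``reducing the task to bounding the twisted $U$-invariants sublinearly,'' which is a restatement of what must be proved, not a proof. The repair, however, is short and uses only tools already present in the paper: for $U\unlhd_o G$, Shapiro's lemma gives $H_2(U,M/N)\simeq H_2(G,[\mathbb{F}_p(G/U)]\otimes_{\mathbb{F}_p}(M/N))$; since $G/U$ is a finite $p$-group, $[\mathbb{F}_p(G/U)]$ has a chain of $G$-submodules with $(G\colon U)$ trivial one-dimensional quotients, and tensoring over $\mathbb{F}_p$ with $M/N$ yields a filtration of $[\mathbb{F}_p(G/U)]\otimes_{\mathbb{F}_p}(M/N)$ whose successive quotients are copies of $M/N$. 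Subadditivity of $\dim_{\mathbb{F}_p}H_2(G,-)$ on short exact sequences (already invoked in the proof of Theorem~\ref{thm_intersection_independent_subgroup}) then gives $\dim_{\mathbb{F}_p}H_2(U,M/N)\le (G\colon U)\dim_{\mathbb{F}_p}H_2(G,M/N)=0$, so $H_2(U,M/N)=0$ for every open $U$ and your $\beta_2^G(M/N)$ vanishes termwise. With that substitution in place of the Nakayama step, your argument is complete.
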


\section{Inertia of retracts}\label{sec_inertia}

For any pro-$p$ group $G$ and any finitely generated closed subgroup $H$ of $G$, define the $[\![\mathbb{F}_pG]\!]$-module $I_{G/H}$ as the kernel of the map $[\![\mathbb{F}_p(G/H)]\!] \to \mathbb{F}_p$ that sends every coset $gH \in G/H$ to the multiplicative identity $1$ in $\mathbb{F}_p$. We write simply $I_G$ for $I_{G/\{1\}}$.

Suppose that $G$ is a Demushkin group. From the long exact sequence on cohomology associated with the inclusion $I_{G/H} \to [\![\mathbb{F}_p(G/H)]\!]$ we find that $$\dim_{\mathbb{F}_p} H_1(G,I_{G/H}) < \dim_{\mathbb{F}_p} H_1(G,[\![\mathbb{F}_p(G/H)]\!]) + \dim_{\mathbb{F}_p} H_2(G,\mathbb{F}_p) = d(H) + 1\,,$$ from which we conclude that $\beta_1^G(I_{G/H})$ is always finite.

\begin{lem}\label{lem_independent_rank} Let $G$ be an infinite Demushkin group and $H$ a closed subgroup of $G$ such that $\beta_1^G(I_{G/H}) = 0$. Then, $d(H) \leq d(G)$. Moreover, if $G$ is solvable, then every closed subgroup $H$ of $G$ is such that $\beta_1^G(I_{G/H}) = 0$. If $G$ is not solvable and the inclusion $H \subseteq G$ is proper, then the index $(G\colon H)$ is infinite, that is, $H$ is a free pro-$p$ group.
\end{lem}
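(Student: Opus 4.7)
The plan is to work throughout with the short exact sequence $0 \to I_{G/H} \to [\![\mathbb{F}_p(G/H)]\!] \to \mathbb{F}_p \to 0$ and combine Proposition~\ref{prop_properties_of_gradients} with the formulas~(\ref{eq_demushkin_rank_formula}) and~(\ref{eq_relation_gradient_demushkin}). A preliminary computation gives $\beta_1^G(\mathbb{F}_p) = d(G) - 2$, using $\dim_{\mathbb{F}_p} H_1(U,\mathbb{F}_p) = d(U)$ for every open $U \trianglelefteq G$ together with the rank formula~(\ref{eq_demushkin_rank_formula}). Proposition~\ref{prop_properties_of_gradients}(iii) applied to the above sequence then yields
\[ \beta_1^G([\![\mathbb{F}_p(G/H)]\!]) \leq \beta_1^G(I_{G/H}) + \beta_1^G(\mathbb{F}_p) = d(G) - 2, \]
and combining this with~(\ref{eq_relation_gradient_demushkin})---whose left-hand side is $d(H) - 2$ or $d(H) - 1$ depending on whether $(G\colon H)$ is finite or infinite---gives $d(H) \leq d(G)$ in either case, with strict inequality when $(G\colon H) = \infty$.

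The non-solvable assertion follows by contradiction: if $G$ is non-solvable and $H$ is proper of finite index, then $(G\colon H) \geq 2$ and $d(G) - 2 \geq 1$, so~(\ref{eq_demushkin_rank_formula}) forces $d(H) \geq d(G) + 1$, contradicting the bound just proved. Hence $(G\colon H) = \infty$ and $H$ is free pro-$p$ by the fact recalled in Section~\ref{sec-preliminaries}. For the solvable assertion, Proposition~\ref{prop_properties_of_gradients}(iii) controls the middle term of the sequence and so gives an inequality in the wrong direction; instead, the plan is to extract from the long exact sequence fragment $H_2(U, \mathbb{F}_p) \to H_1(U, I_{G/H}) \to H_1(U, [\![\mathbb{F}_p(G/H)]\!])$, together with $\dim H_2(U,\mathbb{F}_p) = 1$ (since $U$ is itself Demushkin), the pointwise inequality
\[ \frac{\dim H_1(U, I_{G/H})}{(G\colon U)} \leq \frac{\dim H_1(U, [\![\mathbb{F}_p(G/H)]\!])}{(G\colon U)} + \frac{1}{(G\colon U)}. \]
Evaluating along a cofinal descending chain of open normal subgroups realizing $\beta_1^G([\![\mathbb{F}_p(G/H)]\!])$, the $1/(G\colon U)$ term vanishes, giving $\beta_1^G(I_{G/H}) \leq \beta_1^G([\![\mathbb{F}_p(G/H)]\!])$. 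When $G$ is solvable, $d(G) = 2$, so any open $H$ has $d(H) = 2$ by~(\ref{eq_demushkin_rank_formula}), while any $H$ of infinite index is both free pro-$p$ (by the preliminaries) and solvable (being contained in a solvable group), hence of rank at most $1$; in either case~(\ref{eq_relation_gradient_demushkin}) makes $\beta_1^G([\![\mathbb{F}_p(G/H)]\!])$ equal to $0$, and therefore $\beta_1^G(I_{G/H}) = 0$.

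The main subtlety I anticipate lies in the passage from the dimensional inequality to the gradient inequality in the solvable step, since infima do not in general distribute over sums; the argument relies on the monotonicity of the quotients $\dim H_i(U,M)/(G\colon U)$ with respect to $U$ (cited from \cite[Lem. 4.2]{jaikin_shusterman_19} just before Proposition~\ref{prop_properties_of_gradients}) so that a single descending chain with $(G\colon U) \to \infty$ realizes both infima simultaneously.
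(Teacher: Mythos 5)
Your proof is correct and, for the rank bound $d(H)\leq d(G)$ and for the non-solvable dichotomy, it is essentially identical to the paper's: both rest on the chain $d(H)-2\leq\beta_1^G([\![\mathbb{F}_p(G/H)]\!])\leq\beta_1^G(I_{G/H})+\beta_1^G(\mathbb{F}_p)=d(G)-2$ followed by substitution of the rank formula~(\ref{eq_demushkin_rank_formula}) to rule out proper subgroups of finite index. The one place you genuinely diverge is the solvable case. The paper deduces $\beta_1^G(I_{G/H})\leq\beta_1^G([\![\mathbb{F}_p(G/H)]\!])=0$ by invoking Lemma~\ref{lem_criteria_inequality_relation_gradient} with $M=[\![\mathbb{F}_p(G/H)]\!]$ and $M/N=\mathbb{F}_p$ finite; note that this lemma is stated only for non-solvable Demushkin groups, so the paper is implicitly relying on the underlying result of \cite{jaikin_shusterman_19} holding more generally. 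You instead extract the inequality directly from the fragment $H_2(U,\mathbb{F}_p)\to H_1(U,I_{G/H})\to H_1(U,[\![\mathbb{F}_p(G/H)]\!])$ of the long exact sequence, divide by $(G\colon U)$, and let the error term $1/(G\colon U)$ die along a cofinal descending chain; your appeal to the monotonicity from \cite[Lem.~4.2]{jaikin_shusterman_19} to see that a single chain realizes the infimum is exactly the point that makes this legitimate. This buys you independence from the non-solvability hypothesis of Lemma~\ref{lem_criteria_inequality_relation_gradient}, at the cost of a slightly longer argument; both routes gloss over the degenerate case $H=\{1\}$ in formula~(\ref{eq_relation_gradient_demushkin}), where the module $[\![\mathbb{F}_pG]\!]$ is free and the gradient vanishes for trivial reasons anyway.
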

\begin{proof} The first part is a straightforward computation:
\begin{align*}
    d(H) - 2 &\leq \beta_1^G([\![\mathbb{F}_p(G/H)]\!])\,,\ \text{ by }(\ref{eq_relation_gradient_demushkin})\\
    &\leq \beta_1^G(I_{G/H}) + \beta_1^G(\mathbb{F}_p) = d(G) - 2\,.
\end{align*}

If $G$ is solvable, then for every closed subgroup $H$ of $G$ we have that $$\beta_1^G([\![\mathbb{F}_p(G/H)]\!]) = 0$$ by~\ref{eq_relation_gradient_demushkin}, since $d(H)$ equals $2$ or $1$ according to the index $(G\colon H)$. Thus, $\beta_1^G(I_{G/H}) = 0$ by Lemma~\ref{lem_criteria_inequality_relation_gradient}. Suppose now that $G$ is not solvable, that is, $d(G) > 2$. If the index $(G\colon H)$ is finite, then after substituting $d(H)$ in the inequality $d(H) \leq d(G)$ by the expression on the rank formula of the equation~(\ref{eq_demushkin_rank_formula}), we find that $(G\colon H) = 1$.
\end{proof}

The following proposition is a small extension of \cite[Prop. 4.5]{antolin_zapirain_20}, and it provides us with our main source of subgroups $H$ of Demushkin groups $G$ with vanishing $\beta_1^G(I_{G/H})$.

\begin{prop}\label{prop_retracts_are_independent} Any proper retract $H$ of an infinite Demushkin group $G$ is a finitely generated free pro-$p$ subgroup of $G$ satisfying $\beta_1^G(I_{G/H}) = 0$.
\end{prop}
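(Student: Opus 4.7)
The argument centers on establishing $\beta_1^G(I_{G/H})=0$; once this is in place, the remaining assertions follow from Lemma~\ref{lem_independent_rank}. Since $H$ is a continuous homomorphic image of the finitely generated pro-$p$ group $G$, it is itself finitely generated. If $G$ is solvable then $d(G)=2$, so a proper retract has $d(H)\le 1$; as the infinite solvable Demushkin groups are torsion-free, $H$ is either trivial or isomorphic to $\mathbb{Z}_p$, in particular free pro-$p$, and Lemma~\ref{lem_independent_rank} yields $\beta_1^G(I_{G/H})=0$ at once.

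Assume henceforth that $G$ is non-solvable, and set $N:=\ker\tau$, so that $G=N\rtimes H$. For any open normal subgroup $U\unlhd_o G$, the subgroup $V:=U\cap\tau^{-1}(U)$ is open, normal, and $\tau$-invariant, hence such $V$'s form a cofinal family among open normal subgroups and suffice to compute $\beta_1^G(I_{G/H})$. For such $V$ and any $g\in G$, normality of $V$ forces the conjugate retraction $\tau_g:=c_g\circ\tau\circ c_{g^{-1}}$ (with $c_g(x)=gxg^{-1}$) to also satisfy $\tau_g(V)\subseteq V$; since $\ker\tau_g=N$ regardless of $g$, every intersection $V\cap gHg^{-1}=g(V\cap H)g^{-1}$ is a retract of $V$, all sharing the common complementary kernel $V\cap N$.

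Apply the long exact sequence of homology to $0\to I_{G/H}\to[\![\mathbb{F}_p(G/H)]\!]\to\mathbb{F}_p\to 0$ in combination with the Mackey decomposition
\[
[\![\mathbb{F}_p(G/H)]\!]\big|_V\;\cong\;\bigoplus_{[g]\in V\backslash G/H}\operatorname{Ind}_{V\cap gHg^{-1}}^V\mathbb{F}_p
\]
and Shapiro's lemma. Using that $V$ is Demushkin, so $\dim_{\mathbb{F}_p}H_2(V,\mathbb{F}_p)=1$, the sequence yields
\[
\dim_{\mathbb{F}_p}H_1(V,I_{G/H})\;\le\;1+\dim\ker\phi_V,
\]
where $\phi_V\colon\bigoplus_{[g]}H_1(V\cap gHg^{-1},\mathbb{F}_p)\to H_1(V,\mathbb{F}_p)$ is the sum of inclusion-induced maps, with $(G:VH)$ summands. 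The hard step, and the main obstacle, is the bound $\dim\ker\phi_V=o((G:V))$: each component of $\phi_V$ is split injective by the retract property established above, and by the shared-kernel structure of the retractions $\tau_g|_V$, the images are direct summands of $H_1(V,\mathbb{F}_p)$ all complementary to the single subspace $S=\operatorname{image}(H_1(V\cap N,\mathbb{F}_p)\to H_1(V,\mathbb{F}_p))$. Careful analysis of this configuration, adapting the approach of \cite[Prop.~4.5]{antolin_zapirain_20} and invoking Lemma~\ref{lem_criteria_inequality_relation_gradient} to absorb the extra $H_2$-contribution specific to the Demushkin case, should give $\dim\ker\phi_V=O((G:VH))$; since $(G:VH)/(G:V)=1/(H:V\cap H)\to 0$ as $V$ shrinks, we conclude $\beta_1^G(I_{G/H})=0$.

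Finally, with $\beta_1^G(I_{G/H})=0$ and $H\subsetneq G$ in the non-solvable case, Lemma~\ref{lem_independent_rank} forces $(G:H)=\infty$, and then Serre's theorem on infinite-index closed subgroups of Demushkin groups (\cite[Section~4.5]{serre_97}) guarantees that $H$ is free pro-$p$.
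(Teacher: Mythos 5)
The peripheral parts of your argument are fine: finite generation of $H$ as a quotient of $G$, the solvable case via Lemma~\ref{lem_independent_rank}, and the deduction of freeness from $(G\colon H)=\infty$ all work (the paper gets infinite index even more directly, from torsion-freeness of $G$ and $(G\colon H)=|\ker\tau|$). The problem is the non-solvable case, which is the entire content of the proposition: there you set up a framework but do not prove the key estimate. The phrase ``Careful analysis of this configuration \ldots should give $\dim\ker\phi_V=O((G\colon VH))$'' is precisely where all the difficulty of the statement lives, and nothing you have established delivers it. Concretely, since $V$ is normal the double cosets are indexed by $G/VH$ and each $V\cap gHg^{-1}$ is conjugate to $V\cap H$, so by rank--nullity and the split injectivity you proved,
\[
\dim\ker\phi_V \;=\; (G\colon VH)\,d(V\cap H)\;-\;\dim\Bigl(\sum_{[g]} W_g\Bigr),
\]
where $W_g$ is the image of $H_1(V\cap gHg^{-1},\mathbb{F}_p)$. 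The first term grows like $(G\colon V)(d(H)-1)$, while $\dim H_1(V,\mathbb{F}_p)$ grows like $(G\colon V)(d(G)-2)$; so the observation that all the $W_g$ are complements of the single subspace $S$ cannot by itself force $\dim\ker\phi_V=O((G\colon VH))$ --- you would still need to show that the $W_g$ jointly span a subspace of dimension $(G\colon V)(d(H)-1)-o((G\colon V))$, i.e.\ that the various complements of $S$ are in a suitable sense ``independent'' of one another as $g$ varies. That is exactly the content of \cite[Prop.~4.5]{antolin_zapirain_20}, which you cannot both ``adapt'' and leave unproved. The appeal to Lemma~\ref{lem_criteria_inequality_relation_gradient} ``to absorb the extra $H_2$-contribution'' is likewise not instantiated: you never exhibit a short exact sequence to which that lemma applies.

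The paper's own proof avoids all of this by checking that a retract satisfies the hypothesis of \cite[Prop.~4.5]{antolin_zapirain_20}: the retraction $\tau$ splits the inclusion $H\hookrightarrow G$, hence the induced map $H/\Phi(H)\to G/\Phi(G)$ is injective, and that cited proposition then yields $\beta_1^G(I_{G/H})=0$ directly. If you want a self-contained argument along the lines you sketch, you must actually carry out the spanning estimate above; otherwise the efficient route is to verify the Frattini-quotient hypothesis and cite the result, as the paper does.
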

\begin{proof} Observe that the induced map on the Frattini quotients $H/\Phi(H) \to G/\Phi(G)$ is injective, and therefore the vanishing of $\beta_1^G(I_{G/H})$ follows from \cite[Prop. 4.5]{antolin_zapirain_20}. Let $\tau\colon G \to H$ be the retraction map from $G$ onto a proper retract $H$. Since the index $(G\colon H)$ equals the order of the kernel $\ker(\tau)$ and $G$ is torsion-free, we find that $(G\colon H) = \infty$ and therefore $H$ is a free pro-$p$ group which is finitely generated (because it is a quotient group of $G$).
\end{proof}

Finally, let $H$ be a proper retract of an infinite Demushkin group $G$ and take any closed and finitely generated subgroup $K$ of $G$. By Proposition~\ref{prop_retracts_are_independent}, we have $\beta_1^G(I_{G/H}) = 0$. Since all Demushkin groups possess Howson's property \cite[Thm.~3.1]{shusterman_zalesskii_20}, the intersection $H\cap K$ is a finitely generated subgroup, and therefore the relation gradient $\beta_1^K(I_{K/(H\cap K)})$ is also defined. Hence, Theorem~\ref{thm_retracts_are_inert} is implied by the vanishing of $\beta_1^K(I_{K/(H\cap K)})$, which we shall now prove.

\begin{thm}\label{thm_intersection_independent_subgroup} Let $H$ be a subgroup of a Demushkin group $G$ with $\beta_1^G(I_{G/H}) = 0$, and let $K$ be a closed and finitely generated subgroup of $G$. Then, $\beta_1^K(I_{K/(H\cap K)}) = 0$.
\end{thm}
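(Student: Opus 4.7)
My plan is to relate the modules $I_{K/(H\cap K)}$ and $I_{G/H}$ through a Mackey-type decomposition and then use the long exact sequence in homology together with the hypothesis $\beta_1^G(I_{G/H}) = 0$. Fix a set of double coset representatives $T \subseteq G$ of $K\backslash G/H$ with $1 \in T$, and set $L_g := K \cap gHg^{-1}$ for each $g \in T$. The Mackey decomposition of $G/H$ as a $K$-set gives the $[\![\mathbb{F}_p K]\!]$-module isomorphism
$$\operatorname{Res}^G_K [\![\mathbb{F}_p(G/H)]\!] \;\cong\; [\![\mathbb{F}_p(K/(H\cap K))]\!] \;\hat{\oplus}\; C, \qquad C := \hat{\bigoplus}_{g \in T\setminus\{1\}} [\![\mathbb{F}_p(K/L_g)]\!].$$
Restricting augmentations to kernels produces the short exact sequence
$$0 \to I_{K/(H\cap K)} \to \operatorname{Res}^G_K I_{G/H} \to C \to 0,$$
whose long exact sequence in homology yields, for every open $V \leq K$,
$$\dim H_1(V, I_{K/(H\cap K)}) \leq \dim H_1(V, I_{G/H}) + \dim H_2(V, C).$$
The task is then to exhibit a cofinal family of open $V \leq K$ along which both right-hand terms, divided by $(K:V)$, vanish in the limit.

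I would split on the index $(G:K)$. When $(G:K) < \infty$ the group $K$ is itself Demushkin and $T$ is finite. Choosing $V$ open normal in $G$ with $V \leq K$, Proposition~\ref{prop_properties_of_gradients}(i) gives $\dim H_1(V, I_{G/H})/(K:V) = (G:K)\cdot\dim H_1(V, I_{G/H})/(G:V) \to 0$. A further Mackey decomposition and Shapiro's lemma decompose each $H_2(V, [\![\mathbb{F}_p(K/L_g)]\!])$ as a sum of $H_2(V \cap u L_g u^{-1}, \mathbb{F}_p)$, which vanish whenever $L_g$ has infinite index in $K$ (because then $L_g$ is free pro-$p$) and, when $L_g$ is open in $K$, contribute at most $(K:L_g)$ one-dimensional terms. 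In either case $\dim H_2(V, C)$ is uniformly bounded and $\dim H_2(V, C)/(K:V) \to 0$.

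When $(G:K) = \infty$, the subgroup $K$ is finitely generated free pro-$p$, so $H_2(V, C) = 0$ for every $V \leq K$ and the second error term disappears. However, Proposition~\ref{prop_properties_of_gradients}(i) is no longer available for the first term, since open subgroups of $K$ have infinite index in $G$. I would detour through induction: by Proposition~\ref{prop_properties_of_gradients}(ii),
$$\beta_1^K(I_{K/(H\cap K)}) \;=\; \beta_1^G(\operatorname{Ind}^G_K I_{K/(H\cap K)}),$$
and inducing the defining sequence of $I_{K/(H\cap K)}$ from $K$ to $G$ yields the $G$-module short exact sequence
$$0 \to \operatorname{Ind}^G_K I_{K/(H\cap K)} \to I_{G/(H\cap K)} \to I_{G/K} \to 0.$$
Shapiro's lemma and the fact that $K$ has cohomological $p$-dimension at most one force $H_2(G, I_{G/K}) = 0$, so Lemma~\ref{lem_criteria_inequality_relation_gradient} yields $\beta_1^G(\operatorname{Ind}^G_K I_{K/(H\cap K)}) \leq \beta_1^G(I_{G/(H\cap K)})$. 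A parallel exact sequence $0 \to \operatorname{Ind}^G_H I_{H/(H\cap K)} \to I_{G/(H\cap K)} \to I_{G/H} \to 0$, combined with Proposition~\ref{prop_properties_of_gradients}(iii) and the hypothesis $\beta_1^G(I_{G/H}) = 0$, then reduces the problem to the free pro-$p$ analogue $\beta_1^H(I_{H/(H\cap K)}) = 0$.

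The principal obstacle is this last reduction to a statement internal to the free pro-$p$ group $H$ (which is free by Lemma~\ref{lem_independent_rank} in the non-solvable case). Although the cohomology simplifies dramatically in the free setting, the vanishing $\beta_1^H(I_{H/(H\cap K)}) = 0$ is the free analogue of the very theorem under proof and is not formally implied by the hypothesis. I expect it to be handled either by repeating the Mackey argument internally to $H$ — technically easier because all relevant $H_2$ contributions vanish — or by invoking \cite[Cor. 1.5]{antolin_zapirain_20}, which proves the analogous inertia statement in the free pro-$p$ setting.
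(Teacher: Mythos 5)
Your setup --- the Mackey decomposition of $[\![\mathbb{F}_p(G/H)]\!]$ as a $K$-module, the short exact sequence $0 \to I_{K/(H\cap K)} \to I_{G/H} \to C \to 0$, and the case split on $(G\colon K)$ --- coincides with the paper's, and your treatment of the open case is essentially the paper's argument (the paper packages your long-exact-sequence estimate as Lemma~\ref{lem_criteria_inequality_relation_gradient}, and after reducing to non-solvable $G$ the conjugates $xHx^{-1}\cap K$ are free pro-$p$, so the $H_2$ term vanishes outright). The divergence, and the gap, is in the case $(G\colon K)=\infty$. Your chain $\beta_1^K(I_{K/(H\cap K)})=\beta_1^G(\operatorname{Ind}^G_K I_{K/(H\cap K)})\le\beta_1^G(I_{G/(H\cap K)})\le\beta_1^H(I_{H/(H\cap K)})+\beta_1^G(I_{G/H})$ is correct (granting Howson's property so that $H\cap K$ is finitely generated and the relevant modules are finitely related), but it trades the theorem for the statement $\beta_1^H(I_{H/(H\cap K)})=0$ for an \emph{arbitrary} finitely generated closed subgroup of the free pro-$p$ group $H$, with no vanishing hypothesis left to exploit. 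Neither of your two proposed finishes reaches this. ``Repeating the Mackey argument internally to $H$'' cannot work: that argument only transfers a known vanishing $\beta_1^G(I_{G/H})=0$ down to a subgroup, and inside $H$ there is no such vanishing to start from. And \cite[Cor.~1.5]{antolin_zapirain_20} is a rank inequality $d(H\cap K)\le d(K)$ for discrete free and surface groups; it neither concerns pro-$p$ groups nor asserts anything about the relation gradient of the module $I_{H/(H\cap K)}$, which is a strictly stronger, module-theoretic statement.

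The statement you need is in fact true --- it is Jaikin-Zapirain's theorem that every finitely generated closed subgroup of a free pro-$p$ group is ``independent'' in this homological sense, the key input to the pro-$p$ Hanna Neumann conjecture --- so your reduction could be salvaged by citing that result instead. The paper avoids needing it by arguing differently at this point: it proves $\beta_1^K(I_{G/H})=0$ directly via the submultiplicativity estimate $\beta_1^G([\![\mathbb{F}_p(K\backslash G)]\!]\hat{\otimes}_{\mathbb{F}_p}I_{G/H})\le\beta_1^G([\![\mathbb{F}_p(K\backslash G)]\!])\,\beta_1^G(I_{G/H})=0$, built from \cite[Cor.~7.4 and Lem.~7.1]{jaikin_shusterman_19}, and then uses that $K$ has cohomological dimension $1$ to pass to the submodule $I_{K/(H\cap K)}$. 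As written, your proof is incomplete at its final and most substantial step.
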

\begin{proof} Without loss of generality, we may assume that $H$ is a proper subgroup of $G$. If $G$ is solvable, then the claim follows from Lemma~\ref{lem_independent_rank}, so henceforth we also assume that $d(G) > 2$. Thus, $H$ is a finitely generated free pro-$p$ group by Lemma~\ref{lem_independent_rank}.

Let $P$ be the kernel of the surjection $[\![\mathbb{F}_pK]\!] \to [\![\mathbb{F}_p(K/H\cap K)]\!]$. We have an isomorphism of $[\![\mathbb{F}_pK]\!]$-modules $I_{K/(H\cap K)} \simeq I_K/P$. If $O$ denotes the kernel of $[\![\mathbb{F}_p G]\!] \to [\![\mathbb{F}_p (G/H)]\!]$, then after identifying $[\![\mathbb{F}_p K]\!]$ with a submodule of $[\![\mathbb{F}_p G]\!]$ we also find that $P = I_K\cap O$. Hence, we also have an isomorphism of $K$-modules $I_{K/(H\cap K)} \simeq I_K/(I_K\cap O)$. This allows us to identify $I_{K/(H\cap K)}$ with an $[\![\mathbb{F}_p K]\!]$-submodule of $I_G/O \simeq I_{G/H}$, since $P$, $O$ and $I_K$ are $[\![\mathbb{F}_p K]\!]$-submodules of $I_G$. By choosing a complete set of representatives $X$ for the double cosets $H\backslash G/K$ with $1 \in X$, we obtain the following isomorphism for the quotient module:
\begin{equation} \label{eq_isomorphism_relative_augmentation_kernels}
\begin{aligned}
	I_{G/H}/I_{K/(H\cap K)} &\simeq (I_G/O)/(I_{K}/I_K \cap O)\\
	&\simeq [\![\mathbb{F}_p(G/H)]\!]/[\![\mathbb{F}_p(K/H\cap K)]\!]\\
	&\simeq \left(\bigoplus_{x \in X} [\![\mathbb{F}_p(K/xHx^{-1}\cap K)]\!]\right)/[\![\mathbb{F}_p(K/H\cap K)]\!]\,.
\end{aligned}
\end{equation}

Since $K$ may have infinite index, the set $X$ may be infinite. Thus, the direct sum of pro-$p$ modules appearing in~(\ref{eq_isomorphism_relative_augmentation_kernels}) denotes the direct sum of modules indexed by a profinite space as in \cite[Section 9.1]{ribes_17}. We shall freely use the fact that this direct sum commutes with the homology operator (\cite[Thm. 9.1.3]{ribes_17}), that is, for all $i \in \mathbb{N}$ we have an isomorphism:
$$H_i\left(K,\bigoplus_{x \in X} [\![\mathbb{F}_p(K/xHx^{-1}\cap K)]\!]\right) \simeq \bigoplus_{x \in X} H_i(K, [\![\mathbb{F}_p(K/xHx^{-1}\cap K)]\!])\,.$$

Suppose at first that $K$ is open in $G$, in which case the isomorphism~(\ref{eq_isomorphism_relative_augmentation_kernels}) becomes $$I_{G/H}/I_{K/(K\cap H)} \simeq \bigoplus_{x \in X - \{1\}} [\![\mathbb{F}_p(K/xHx^{-1}\cap K)]\!]\,.$$ Then, since $\beta_1^G(I_{G/H}) = 0$, we find that $$\beta_1^K(I_{G/H}) = (G\colon K)\beta_1^G(I_{G/H}) = 0\,.$$ Therefore, by Lemma~\ref{lem_criteria_inequality_relation_gradient}, it suffices to prove that $$H_2(K,I_{G/H}/I_{K/(H\cap K)}) \simeq \bigoplus_{x \in X-\{1\}} H_2(K,[\![\mathbb{F}_p(K/xHx^{-1}\cap K)]\!]) = 0\,.$$ However, $xHx^{-1} \cap K$ is a free pro-$p$ group, and thus we apply Shapiro's lemma to obtain $$H_2(K,[\![\mathbb{F}_p(K/xHx^{-1}\cap K)]\!]) \simeq H_2(xHx^{-1}\cap K, \mathbb{F}_p) = 0\,.$$ Hence, $\beta_1^K(I_{K/(H\cap K)}) = 0$.

Suppose now that the index $(G\colon K)$ is infinite and we shall prove again that $\beta_1^K(I_{G/H})$ vanishes. Since $$\beta_1^K(I_{G/H}) = \beta_1^G(\operatorname{Ind}^G_K I_{G/H}) = \beta_1^G([\![\mathbb{F}_p (K\backslash G)]\!] \hat{\otimes}_{\mathbb{F}_p} I_{G/H})\,,$$ it suffices to show the inequality
\begin{equation}\label{eq_submultiplicativity_independence}
    \beta_1^G([\![\mathbb{F}_p (K\backslash G)]\!] \hat{\otimes}_{\mathbb{F}_p} I_{G/H}) \leq \beta_1^G([\![\mathbb{F}_p (K\backslash G)]\!]) \beta_1^G(I_{G/H}) = 0\,.
\end{equation}
We procceed as in \cite[Section~8]{jaikin_shusterman_19}.

By \cite[Cor. 7.4]{jaikin_shusterman_19}, there is an open subgroup $U$ of $G$ and an $[\![\mathbb{F}_p U]\!]$-submodule $A$ of $[\![\mathbb{F}_p(K\backslash G)]\!]$ such that $\beta_1^U(A) = 0$ and
\begin{equation}\label{eq_first_thm_dimension_submodule_A}
    \dim_{\mathbb{F}_p} [\![\mathbb{F}_p(K\backslash G)]\!]/A \leq \beta_1^G([\![\mathbb{F}_p(K\backslash G)]\!])\,.
\end{equation}
Hence, we obtain the inequality
\begin{equation}\label{eq_first_thm_first_bound}
    \beta_1^U([\![\mathbb{F}_p(K\backslash G)]\!]\hat{\otimes}_{\mathbb{F}_p} I_{G/H}) \leq \beta_1^U(A\hat{\otimes}_{\mathbb{F}_p} I_{G/H}) + \beta_1^U(([\![\mathbb{F}_p(K\backslash G)]\!]/A)\hat{\otimes}_{\mathbb{F}_p} I_{G/H})\,.
\end{equation}
By repeatedly applying the subadditivity of the relation gradient and the inequality~(\ref{eq_first_thm_dimension_submodule_A}), we find that the last term of~(\ref{eq_first_thm_first_bound}) satisfies the bound
\begin{equation}\label{eq_first_thm_last_term}
    \beta_1^U(([\![\mathbb{F}_p(K\backslash G)]\!]/A)\hat{\otimes}_{\mathbb{F}_p} I_{G/H}) \leq \beta_1^G([\![\mathbb{F}_p(K\backslash G)]\!])\beta_1^U(I_{G/H}) = 0\,.
\end{equation}

The $[\![\mathbb{F}_p K]\!]$-module $I_{G/H}$ is finitely related since $K$ is finitely generated and there is an isomorphism $$H_1(K, I_{G/H}) \leq H_1(K,[\![\mathbb{F}_p(G/H)]\!]) \simeq \bigoplus_{x \in X} H_1(xHx^{-1} \cap K,\mathbb{F}_p)$$ with the latter group being finite by \cite[Cor. 3.4]{jaikin_shusterman_19} and Howson's property. Thus, we can apply \cite[Lem. 7.1]{jaikin_shusterman_19} to find an open $[\![\mathbb{F}_p G]\!]$-submodule $B$ of $I_{G/H}$ such that $\beta_1^K(B) = 0$. Once again, we have the inequality
\begin{equation}\label{eq_first_thm_second_bound}
    \beta_1^U(A \hat{\otimes}_{\mathbb{F}_p} I_{G/H}) \leq \beta_1^U(A \hat{\otimes}_{\mathbb{F}_p} B) + \beta_1^U(A \hat{\otimes}_{\mathbb{F}_p} (I_{G/H}/B))\,.
\end{equation}
As it were the case with~(\ref{eq_first_thm_last_term}), repeated applications of the subadditive property gives us the bound
\begin{equation}\label{eq_first_thm_first_term}
     \beta_1^U(A \hat{\otimes}_{\mathbb{F}_p} (I_{G/H}/B)) \leq \beta_1^U(A)\cdot \dim_{\mathbb{F}_p} (I_{G/H}/B) = 0\,.
\end{equation}

Hence, by combining the inequalities~(\ref{eq_first_thm_first_bound}) through~(\ref{eq_first_thm_first_term}), we obtain
\begin{equation}\label{eq_first_thm_last_bound}
     \beta_1^U([\![\mathbb{F}_p(K\backslash G)]\!]\hat{\otimes}_{\mathbb{F}_p} I_{G/H}) \leq \beta_1^U(A \hat{\otimes}_{\mathbb{F}_p} B)\,.
\end{equation}
Since $U$ has cohomological dimension 2, we have the inequalities
\begin{align*}
    \dim_{\mathbb{F}_p} H_2(U,B) &\leq \dim_{\mathbb{F}_p} H_2(U,I_{G/H})\\
    &\leq \dim_{\mathbb{F}_p} H_2(U,[\![\mathbb{F}_p(G/H)]\!])\\
    &= \dim_{\mathbb{F}_p} H_2(G, [\mathbb{F}_p(U/G)] \otimes_{\mathbb{F}_p} [\![\mathbb{F}_p (G/H)]\!])\text{, by Shapiro's lemma}\\
    &= \dim_{\mathbb{F}_p} H_2(H, [\mathbb{F}_p(U/G)])\text{, again by Shapiro's lemma}\\
    &= 0\,.
\end{align*}
Since $\dim_{\mathbb{F}_p} H_2(U,-)$ is also subadditive with respect to short exact sequences, we deduce that
\begin{equation}\label{eq_first_thm_H2_vanishes}
    H_2(U, ([\![\mathbb{F}_p(K\backslash G)]\!]/A)\hat{\otimes}_{\mathbb{F}_p} B) = 0\,.
\end{equation}

We wrap up the argument with the observation that Lemma~\ref{lem_criteria_inequality_relation_gradient} applies to the short exact sequence of $[\![\mathbb{F}_pU]\!]$-modules $$0 \to A\hat{\otimes}_{\mathbb{F}_p} B \to [\![\mathbb{F}_p(K\backslash G)]\!] \hat{\otimes}_{\mathbb{F}_p} B \to ([\![\mathbb{F}_p(K\backslash G)]\!]/A)\hat{\otimes}_{\mathbb{F}_p} B \to 0$$ by~(\ref{eq_first_thm_H2_vanishes}). Thus:
\begin{align}\label{eq_first_thm_last_vanishing}
    \beta_1^U(A\hat{\otimes}_{\mathbb{F}_p} B) &\leq \beta_1^U([\![\mathbb{F}_p(K\backslash G)]\!] \hat{\otimes}_{\mathbb{F}_p} B)\\
    &= (G\colon U)\beta_1^G(\operatorname{Ind}^G_K B)\notag\\
    &= (G\colon U)\beta_1^K(B) = 0\,.\notag
\end{align}
Multiplying by $(G\colon U)$ and using the index proportionality of the relation gradient (statement (i) of Proposition~\ref{prop_properties_of_gradients}), we see that the combination of~(\ref{eq_first_thm_first_bound}) through~(\ref{eq_first_thm_last_vanishing}) is readily equivalent to~(\ref{eq_submultiplicativity_independence}):
\begin{align*}
    \beta_1^G([\![\mathbb{F}_p(K\backslash G)]\!] \hat{\otimes}_{\mathbb{F}_p} I_{G/H}) &\leq (G\colon U)\beta_1^U([\![\mathbb{F}_p(K\backslash G)]\!] \hat{\otimes}_{\mathbb{F}_p} I_{G/H})\\
    &\leq (G\colon U)\beta_1^U(A\hat{\otimes}_{\mathbb{F}_p} I_{G/H})\,,\text{ by (\ref{eq_first_thm_first_bound}) and (\ref{eq_first_thm_last_term})}\\
    &\leq (G\colon U)\beta_1^U(A \hat{\otimes}_{\mathbb{F}_p} B)\,,\text{ by (\ref{eq_first_thm_second_bound}) and (\ref{eq_first_thm_first_term})}\\
    &= 0\,,\text{ by (\ref{eq_first_thm_last_vanishing}).}
\end{align*}
Since $K$ has cohomological dimension 1, there is the bound $$\beta_1^K(I_{K/(H\cap K)}) \leq \beta_1^K(I_{G/H}) = 0\,,$$ which shows that $\beta_1^K(I_{K/(H\cap K)}) = 0$.
\end{proof}

\begin{proof}[Proof of \ref{thm_retracts_are_inert}] If $G$ is solvable, then every closed subgroup $H$ of $G$ is inert; this follows from the fact that $d(K) \leq 2$ for every closed subgroup $K$ of $G$. If $G$ is not solvable, then the statement follows from Lemma~\ref{lem_independent_rank}, Proposition~\ref{prop_retracts_are_independent} and Theorem~\ref{thm_intersection_independent_subgroup}.
\end{proof}

\bibliographystyle{amsplain}
\bibliography{retracts}

\providecommand{\bysame}{\leavevmode\hbox to3em{\hrulefill}\thinspace}
\providecommand{\MR}{\relax\ifhmode\unskip\space\fi MR }
\providecommand{\MRhref}[2]{%
  \href{http://www.ams.org/mathscinet-getitem?mr=#1}{#2}
}
\providecommand{\href}[2]{#2}
\begin{thebibliography}{10}

\bibitem{antolin_zapirain_20}
Y.~Antol\'in and A.~Jaikin-Zapirain, \emph{The {H}anna {N}eumann conjecture for
  surface groups}, Preprint. Available at
  \url{https://matematicas.uam.es/~andrei.jaikin/preprints/HNsurface.pdf}.
  Accessed: October 19th, 2021., 2020.

\bibitem{dicksventura}
W.~Dicks and E.~Ventura, \emph{The group fixed by a family of injective
  endomorphisms of a free group}, Contemporary Mathematics, vol. 195, 1996.

\bibitem{herfortSubgroupsFreePropproducts1987}
W.~Herfort and L.~Ribes, \emph{Subgroups of free pro-p-products}, Mathematical
  Proceedings of the Cambridge Philosophical Society \textbf{101} (1987),
  no.~2, 197--206, Publisher: Cambridge University Press.

\bibitem{jaikin_shusterman_19}
A.~Jaikin-Zapirain and M.~Shusterman, \emph{The {H}anna {N}eumann conjecture
  for {D}emushkin groups}, Advances in Mathematics \textbf{349} (2019), 1--28.

\bibitem{lubotzkyCombinatorialGroupTheory1982}
A.~Lubotzky, \emph{Combinatorial group theory for pro-p groups}, Journal of
  Pure and Applied Algebra \textbf{25} (1982), no.~3, 311--325.

\bibitem{ribes_17}
L.~Ribes, \emph{Profinite graphs and groups}, first ed., Springer, 2017.

\bibitem{ribes_zalesskii_10}
L.~Ribes and P.~Zalesskii, \emph{Profinite groups}, second ed., Springer, 2010.

\bibitem{serre_97}
J.-P. Serre, \emph{Galois cohomology}, first ed., Springer, 1997.

\bibitem{shusterman_zalesskii_20}
M.~Shusterman and P.~Zalesskii, \emph{Virtual retraction and {H}owson's theorem
  in pro-$p$ groups}, Transactions of the American Mathematical Society
  \textbf{373} (2020), 1501--1527.

\bibitem{turner}
E.~Turner, \emph{Test words for automorphisms of free groups}, Bulletin of the
  London Mathematical Society \textbf{28} (1996), no.~3, 255--263.

\bibitem{venturasurvey}
E.~Ventura, \emph{Fixed subgroups in free groups: a survey}, Contemporary
  Mathematics \textbf{296} (2002), 231--255.

\end{thebibliography}
\end{document}